\definecolor{red}{rgb}{1,0,0}
\definecolor{blue}{rgb}{.2,.2,.8}
\newtheorem{theorem}{Theorem}
\newtheorem{corollary}{Corollary}
\newtheorem{proposition}{Proposition}
\newtheorem{lemma}{Lemma}
\theoremstyle{remark}
\newtheorem{example}{Example}
\newcommand*\samethanks[1][\value{footnote}]{\footnotemark[#1]}
\begin{document}

\title{Inequalities involving the generating function for the number of 
partitions into odd parts}
\author{Cristina Ballantine\thanks{This work was partially supported by a grant from the Simons Foundation (\#245997 to Cristina Ballantine).}\\
\footnotesize Department of Mathematics and Computer Science\\
\footnotesize College of The Holy Cross\\
\footnotesize Worcester, MA 01610, USA \\
\footnotesize cballant@holycross.edu
\and Mircea Merca\samethanks
 \\ 
\footnotesize Department of Mathematics\\
\footnotesize University of Craiova\\
\footnotesize Craiova, DJ 200585, Romania\\
\footnotesize mircea.merca@profinfo.edu.ro
}
\date{}

\maketitle
\begin{abstract}
Fibonacci numbers can be expressed in terms of multinomial coefficients as sums over integer partitions into odd parts. We use this fact to introduce a  family of double inequalities involving the generating function for the number of partitions into odd parts and the generating function for the number of odd divisors.
\\
\\
{\bf Keywords:} integer partitions, Fibonacci numbers, multinomial coefficients
\\
\\
{\bf MSC 2010:} 05A20, 05A19, 05A17, 11B39
\end{abstract}

\section{Introduction}

The classical multinomial expansion is given by
$$(x_1+x_2+\cdots+x_n)^k=\sum\binom{k}{t_1,t_2,\ldots,t_n}x_1^{t_1}x_2^{t_2}\cdots x_n^{t_n},$$
where the sum is over all nonnegative integer solutions to the equation
$$t_1+t_2+\cdots+t_n=k$$
and
$$\binom{k}{t_1,t_2,\ldots,t_n}=\frac{k!}{t_1!t_2!\cdots t_n!}.$$
is  a multinomial coefficient. According to Fine \cite{Fine59}, we have the following connection between the binomial coefficients and multinomial coefficients:
\begin{equation} \label{eq1}
\binom{n-1}{k-1,n-k}=\sum_{\substack{t_1+2t_2+\cdots+nt_n=n\\t_1+t_2+\cdots+t_n=k}}\binom{k}{t_1,t_2,\ldots,t_n}, 
\end{equation}
with $n,k>0$. Recently, using this Fine's formula, Merca \cite{Merca14b} obtained new upper bounds for the number of partitions of $n$ into $k$ parts.

Recall that a composition of a positive integer $n$ is a way of writing $n$ as a sum of positive integers, i.e.,
\begin{equation*}\label{eq:1.1}
n=\lambda_1+\lambda_2+\cdots+\lambda_k.
\end{equation*}
When the order of integers $\lambda_i$ does not matter, this representation is known as an integer partition \cite{Andrews76} and can be rewritten as
$$n=t_1+2t_2+\cdots+nt_n,$$
where each positive integer $i$ appears $t_i$ times. The number of parts of this partition is given by
$$t_1+t_2+\cdots+t_n=k.$$
Throughout the paper, we denote by $q(n)$ the number of integer partitions of $n$ with odd parts. It is well-known that this equals the 
 number of integer partitions of $n$ into distinct parts.  We define $q(0) = 1$ and $q(n) = 0$ for any negative integer $n$.

\begin{example}
The integer $6$ has four partitions into odd parts:
$$1+1+1+1+1+1\ =\ 1+1+1+3\ =\ 1+5\ =\ 3+3$$
and four partitions into distinct parts:
$$1+2+3\ =\ 1+5\ =\ 2+4\ =\ 6.$$
Thus, $q(6)=4$.
\end{example}

The sequence $\{F_n\}_{n\geqslant 0}$ of Fibonacci numbers is defined by the recurrence relation
\begin{equation}\label{eq2}
F_n - F_{n-1} - F_{n-2} = 0,
\end{equation}
with seed values
$$F_0 = 0 \qquad\text{and}\qquad F_1 = 1\ .$$
Recently, Merca \cite[Corollary 9]{Merca14a} proved that the Fibonacci numbers can be expressed in terms of multinomial coefficients as sums over integer partitions into odd parts, i.e.,
\begin{equation}\label{eq3}
F_n=\sum_{t_1a_1+t_2a_2+\cdots+t_{\left\lceil n/2 \right\rceil }a_{\left\lceil n/2 \right\rceil }=n}\binom{t_1+t_2+\cdots+t_{\left\lceil n/2 \right\rceil }}{t_1,t_2,\ldots,t_{\left\lceil n/2 \right\rceil }},\ n>0, 
\end{equation}
where $a_k=2k-1$ and $\lceil x \rceil$ stands for the smallest integer not less than $x$.
Therefore,  for any positive integer $n$ we have
\begin{equation}\label{eq4}
q(n)\leqslant F_n.
\end{equation}

The inequality \eqref{eq4} also follows from the fact that the number of compositions of $n$ into odd parts equals $F_n$ \cite[Exercise 35 (d), pg. 121]{Stanleyec1}.

In this paper, we  use the method presented in \cite{Merca14b} to introduce new upper bounds involving Fibonacci numbers for the number of partitions of $n$ into distinct parts. As corollaries of this fact, we derive several infinite families of  inequalities involving the generating function for the number of odd divisors. We demonstrate the usefulness of these inequalities in several examples in which we find explicit upper bounds for $$\sum_{n=1}^{\infty} \frac{4^n}{4^{2n}-1}$$ and $$\prod_{n=1}^{\infty} \left( 1+ \frac{1}{4^n}\right).$$ Such products, in particular, have been of interest in number theory of a long time. As far as we know, there are no analytic proofs for bounds of the type given in this article.

\section{On the multinomial coefficients over partitions into odd parts}

We denote by $Q_k(n)$ the number of multinomial coefficients such that
$$\binom{t_1+t_2+\cdots+t_{\left\lceil n/2 \right\rceil }}{t_1,t_2,\ldots,t_{\left\lceil n/2 \right\rceil }}=k$$
and
$$\sum_{k=1}^{\lceil n/2 \rceil}(2k-1)t_k=n.$$
Then, 
\begin{equation}\label{eq2.0a}
\sum_{k\geqslant 1}Q_k(n)=q(n)
\end{equation}
and, from  \eqref{eq3}, we have
\begin{equation}\label{eq2.0b}
\sum_{k\geqslant 1}kQ_k(n)=F_n.
\end{equation}
We see that $Q_1(n)$ is equal to the number of odd positive divisors of $n$. Thus, if $\tau(n)$ denotes the number of divisions of $n$, we have
$$Q_1(n)=\begin{cases}
\tau(n), & \text{if $n$ is odd,}\\
\tau(n)-\tau(n/2), & \text{if $n$ is even}
\end{cases}$$
and
\begin{equation}\label{eq2.1}
Q_2(n)=\begin{cases}
0, & \text{if $n$ is odd,}\\
\left\lfloor \dfrac{n}{4} \right\rfloor, & \text{if $n$ is even,}
\end{cases}\\
\end{equation}
where $\lfloor x \rfloor$ stands for the greatest integer less than or equal to $x$.

Notice that $Q_2(n)$ equals the number of partitions of $n$ with two distinct odd parts.

On the other hand, following Guo \cite[Theorem 1]{Guo14}, it is an easy exercise to derive the following formula for $Q_k(n)$, when $k$ is a power of a prime. 

\begin{theorem}\label{Th1}
Let $p$ be an odd prime and let $n$, $r$ be positive integers. Then
$$Q_{p^r}(n)=\begin{cases}
\left\lceil \frac{1}{2} \left\lfloor \frac{n-1}{p^r-1} \right\rfloor \right\rceil - \delta_{0,n \bmod p^r},& \text{if $n$ is odd,}\\
0, & \text{if $n$ is even.}
\end{cases}
$$
For $r>1$,
$$Q_{2^r}(n)=\begin{cases}
0, & \text{if $n$ is odd,}\\
\left\lceil \frac{1}{2} \left\lfloor \frac{n-1}{2^r-1} \right\rfloor \right\rceil - \delta_{0,n \bmod 2^r}\cdot \left\lfloor \frac{n}{2^r} \right\rfloor \bmod 2,& \text{if $n$ is even,}
\end{cases}
$$
where $\delta_{i,j}$ is the Kronecker delta.
\end{theorem}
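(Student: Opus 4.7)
The plan is to first characterize, following Guo's Theorem 1, which partitions of $n$ into odd parts contribute to $Q_{p^r}(n)$, and then count them. The characterization to invoke says that for $p$ prime and $r\geq 1$, a multinomial $\binom{t_1+\cdots+t_m}{t_1,\ldots,t_m}$ equals $p^r$ if and only if exactly two of the $t_i$ are nonzero, with values $p^r-1$ and $1$ (realizing $\binom{p^r}{1}=p^r$). Because either $p$ is odd or $p=2$ with $r\geq 2$, we have $p^r-1\neq 1$, so the two roles---``the part repeated $p^r-1$ times'' versus ``the part appearing once''---are distinguishable, and each contributing partition arises from a unique configuration. Thus $Q_{p^r}(n)$ will equal the number of pairs $(a,b)$ of distinct positive odd integers satisfying $(p^r-1)a+b=n$, where $a$ is the repeated part and $b$ the singleton part.

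A parity check on $b=n-(p^r-1)a$ yields the vanishing clauses of the theorem: for odd $p$, $p^r-1$ is even, so $b\equiv n\pmod 2$, forcing $Q_{p^r}(n)=0$ whenever $n$ is even; for $p=2$ with $r\geq 2$, $2^r-1$ is odd, so $b\equiv n-a\pmod 2$ is odd iff $n$ is even, forcing $Q_{2^r}(n)=0$ whenever $n$ is odd. In the non-vanishing branch the constraint $b\geq 1$ becomes $a\leq\lfloor(n-1)/(p^r-1)\rfloor$, and the number of positive odd integers in this range is $\lceil\tfrac12\lfloor(n-1)/(p^r-1)\rfloor\rceil$, exactly the leading term in the formula.

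Next I would subtract the coincidence $a=b$, which collapses the partition to one distinct part (multinomial $1$, not $p^r$) and so must be excluded. Combining $a=b$ with $(p^r-1)a+b=n$ gives $a=n/p^r$, so this contributes whenever $p^r\mid n$ and the resulting $a$ is a positive odd integer. For odd $p$ with $n$ odd, the value $n/p^r$ is automatically odd, yielding the correction $\delta_{0,n\bmod p^r}$. For $p=2$ one additionally needs $n/2^r$ to be odd---equivalently $v_2(n)=r$ exactly---which is precisely what the extra factor $\lfloor n/2^r\rfloor\bmod 2$ records.

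The hard part is the characterization itself: one must rule out any multinomial with three or more positive $t_i$'s being a prime power, and must show that $\binom{a+b}{a}=p^r$ with $a,b\geq 1$ forces $\{a,b\}=\{1,p^r-1\}$. These facts rest on Kummer's theorem for $p$-adic valuations of binomial coefficients together with a Sylvester--Schur-type bound, and together they form the content of Guo's Theorem 1, cited in the paragraph preceding the statement. Once that input is accepted, the remainder of the proof is the elementary parity and counting bookkeeping sketched above.
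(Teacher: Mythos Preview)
The paper does not actually supply a proof of this theorem; it merely remarks that, ``following Guo [Theorem~1], it is an easy exercise to derive'' the formula. Your proposal is a correct and complete execution of precisely that exercise: you invoke Guo's characterization to reduce $Q_{p^r}(n)$ to the number of ordered pairs $(a,b)$ of distinct positive odd integers with $(p^r-1)a+b=n$, use parity of $b=n-(p^r-1)a$ to obtain the vanishing branches, count odd $a$ in the range $1\le a\le\lfloor(n-1)/(p^r-1)\rfloor$ to get the leading $\lceil\tfrac12\lfloor\cdot\rfloor\rceil$ term, and subtract the coincidence $a=b\Leftrightarrow p^r a=n$ to get the Kronecker-delta correction (with the extra odd-quotient factor when $p=2$). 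This is exactly the argument the paper has in mind, so there is nothing to add or compare.
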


It is well known that the generating function for the number of odd positive divisors is given by
\begin{equation}\label{Q1gen}
\sum_{n=1}^{\infty}Q_1(n)x^n=\sum_{n=1}^{\infty}\frac{x^n}{1-x^{2n}},\qquad \left|x\right|<1.
\end{equation}
On the other hand, we have
\begin{equation}\label{Q2gen}\sum_{n=1}^{\infty}Q_2(n)x^n
=\sum_{n=1}^{\infty}\left\lfloor \frac{n}{2} \right\rfloor x^{2n}
= \frac{x^4}{(1-x^2)(1-x^4)},\qquad \left|x\right|<1.\end{equation} 


The next theorem gives the  generating function for $\{Q_k(n)\}_{n\geq1}$, when $k$ is a power of a prime. 

\begin{theorem}\label{Th2}
Let $p$ be a prime and let $r$ be a positive integer with $p^r>2$. The generating function of $Q_{p^r}(n)$ is given by
$$\sum_{n=1}^{\infty}Q_{p^r}(n)x^n=\frac{x^{p^r}}{(1-x^2)\left(1-x^{2(p^r-1)} \right) }-\frac{x^{p^r}}{1-x^{2p^r}},\qquad \left|x\right|<1.$$
\end{theorem}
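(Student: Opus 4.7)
The plan is to derive the generating function from the closed-form expression for $Q_{p^r}(n)$ given by Theorem \ref{Th1}. I would expand the proposed right-hand side as a formal power series and then compare the coefficient of $x^n$ with the formulas in Theorem \ref{Th1}, treating the cases $p$ odd and $p=2$ (with $r>1$) in parallel.

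Expanding $\frac{x^{p^r}}{(1-x^2)(1-x^{2(p^r-1)})}$ as a double geometric series yields exponents of the form $p^r + 2j + 2i(p^r-1)$ with $i,j \geq 0$; all such exponents share the parity of $p^r$, which already accounts for the vanishing of $Q_{p^r}(n)$ for $n$ of opposite parity stated in Theorem \ref{Th1}. The coefficient of $x^n$ (for $n \geq p^r$ of matching parity) equals the number of pairs $(i,j)$ with $j + i(p^r-1) = (n - p^r)/2$, namely $\lfloor (n - p^r)/(2(p^r - 1))\rfloor + 1$. The hinge of the proof is then the identity
\begin{equation*}
\left\lfloor \frac{n-p^r}{2(p^r-1)}\right\rfloor + 1 = \left\lceil \frac{1}{2}\left\lfloor \frac{n-1}{p^r-1}\right\rfloor \right\rceil,
\end{equation*}
which I would establish by writing $n-1 = q(p^r-1) + s$ with $0 \leq s < p^r-1$ and performing a short case split on the parity of $q$; this identifies the first summand with the ceiling term in Theorem \ref{Th1}.

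Finally, the second summand $\frac{x^{p^r}}{1-x^{2p^r}} = \sum_{i \geq 0} x^{(2i+1)p^r}$ contributes $1$ precisely at odd multiples of $p^r$. For odd $p$, any odd $n$ divisible by $p^r$ is automatically an odd multiple of $p^r$, so this subtracts exactly the $\delta_{0,n \bmod p^r}$ correction from Theorem \ref{Th1}. For $p=2$ with $r>1$, odd multiples of $2^r$ are exactly the $n$ satisfying $2^r \mid n$ and $n/2^r$ odd, which matches $\delta_{0,n \bmod 2^r}\cdot ((n/2^r) \bmod 2)$. The main obstacle is the floor/ceiling identity displayed above, but after the parity case split it reduces to routine bookkeeping, and the remaining matching of parity and correction terms is a direct comparison.
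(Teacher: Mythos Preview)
Your proposal is correct and rests on the same ingredients as the paper's proof: Theorem~\ref{Th1} together with the floor/ceiling arithmetic $\lceil \tfrac12 \lfloor m/k\rfloor\rceil = \lfloor (m+k)/(2k)\rfloor$ (which your parity split on $q$ reproves). The only difference is direction: the paper sums $\sum_n Q_{p^r}(n)x^n$ forward, invoking the known generating function $\sum_{n\ge 0}\lfloor n/k\rfloor x^n = x^k/((1-x)(1-x^k))$, whereas you expand the claimed right-hand side and match coefficients against Theorem~\ref{Th1}.
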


\begin{proof}
For all positive integers $n$ and $k$, we have
$$\left\lceil \frac{1}{2} \left\lfloor \frac{n}{k} \right\rfloor \right\rceil = \left\lfloor \frac{n+k}{2k} \right\rfloor.$$
When $p$ is an odd prime and $r$ is a positive integer, this identity and Theorem \ref{Th1} allow us to write
\begin{eqnarray*}
\sum_{n=1}^{\infty}Q_{p^r}(n)x^n
&=& \sum_{n=1}^{\infty} \left\lceil \frac{1}{2} \left\lfloor \frac{2n-2}{p^r-1} \right\rfloor \right\rceil x^{2n-1}-\sum_{n=1}^{\infty} x^{p^r(2n-1)}\\
&=& \sum_{n=1}^{\infty} \left\lfloor \frac{2n-2+p^r-1}{2(p^r-1)} \right\rfloor  x^{2n-1}-\frac{1}{x^{p^r}}\sum_{n=1}^{\infty} x^{2p^rn}\\
&=& \sum_{n=1}^{\infty} \left\lfloor \frac{n-1}{p^r-1} \right\rfloor  x^{2n-p^r}-\frac{1}{x^{p^r}}\left( \frac{1}{1-x^{2p^r}}-1\right)\\
&=&\frac{1}{x^{p^r-2}} \sum_{n=0}^{\infty} \left\lfloor \frac{n}{p^r-1} \right\rfloor  x^{2n}-\frac{x^{p^r}}{1-x^{2p^r}}\\
&=&\frac{1}{x^{p^r-2}}\cdot \frac{x^{2(p^r-1)}}{(1-x^2)(1-x^{2(p^r-1)})}-\frac{x^{p^r}}{1-x^{2p^r}},\qquad \left|x \right|<1.
\end{eqnarray*}
The last identity follows from the generating function of $\left\lfloor n/k \right\rfloor$,
$$\sum_{n=0}^{\infty}\left\lfloor \frac{n}{k}\right\rfloor x^n
=\frac{x^k}{(1-x)(1-x^k)},\qquad \left|x \right|<1,$$
where $k$ is a positive integer.

Now let $p=2$. If $n \bmod 2^r=0$ and $\left\lfloor \frac{n}{2^r} \right\rfloor \bmod 2=1$, we must have $n=2^r k$ with $k$ odd.
Thus, in the case $p=2$ and $r>1$, using Theorem \ref{Th1}, we have
\begin{eqnarray*}
\sum_{n=1}^{\infty}Q_{2^r}(n)x^n
&=& \sum_{n=1}^{\infty} \left\lceil \frac{1}{2} \left\lfloor \frac{2n-1}{2^r-1} \right\rfloor \right\rceil x^{2n}-\sum_{n=1}^{\infty} x^{2^r(2n-1)}\\
&=& \sum_{n=1}^{\infty} \left\lfloor \frac{2n-1+2^r-1}{2(2^r-1)} \right\rfloor  x^{2n}-\frac{1}{x^{2^r}}\sum_{n=1}^{\infty} x^{2^{r+1}n}\\
&=& \sum_{n=1}^{\infty} \left\lfloor \frac{n-1}{2^r-1} \right\rfloor  x^{2n-2^r}-\frac{1}{x^{2^r}}\left( \frac{1}{1-x^{2^{r+1}}}-1\right)\\
&=&\frac{1}{x^{2^r-2}} \sum_{n=0}^{\infty} \left\lfloor \frac{n}{2^r-1} \right\rfloor  x^{2n}-\frac{x^{2^r}}{1-x^{2^{r+1}}}\\
&=&\frac{1}{x^{2^r-2}}\cdot \frac{x^{2^{r+1}-2}}{(1-x^2)(1-x^{2^{r+1}-2})}-\frac{x^{2^r}}{1-x^{2^{r+1}}},
\end{eqnarray*}
This concludes the proof.
\end{proof}

As in \cite[Theorem 2]{Merca14b}, we obtain

\begin{theorem}\label{Th3}
For $n,k>0$,
$$q(n) \leqslant \frac{1}{k}\left(  F_n+\sum_{j=1}^{k-1}(k-j)Q_j(n)\right) .$$
\end{theorem}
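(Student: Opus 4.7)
The plan is to multiply both sides of the claimed inequality by $k$ and reduce everything to the two identities \eqref{eq2.0a} and \eqref{eq2.0b}, which respectively give $q(n)=\sum_{j\geq 1}Q_j(n)$ and $F_n=\sum_{j\geq 1}jQ_j(n)$. After clearing the $1/k$, the desired statement becomes
\[
k\,q(n) \;\leq\; F_n + \sum_{j=1}^{k-1}(k-j)Q_j(n),
\]
so the task reduces to a clean rearrangement of a series with nonnegative terms.

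First I would expand the left-hand side as $k\,q(n)=\sum_{j\geq 1}kQ_j(n)$ using \eqref{eq2.0a}. On the right-hand side, I would use \eqref{eq2.0b} to split
\[
F_n \;=\; \sum_{j=1}^{k-1}jQ_j(n) + \sum_{j\geq k}jQ_j(n),
\]
and then combine the $\sum_{j=1}^{k-1}jQ_j(n)$ piece with $\sum_{j=1}^{k-1}(k-j)Q_j(n)$ to obtain $\sum_{j=1}^{k-1}kQ_j(n)$. After this regrouping, the inequality is equivalent to
\[
\sum_{j\geq k}kQ_j(n) \;\leq\; \sum_{j\geq k}jQ_j(n),
\]
which is immediate because $Q_j(n)\geq 0$ and $j\geq k$ on the range of summation.

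There is essentially no obstacle here: the Fibonacci weighting \eqref{eq2.0b} is precisely what makes the contribution of multinomial coefficients of size $\geq k$ absorb the "excess" over $k\cdot q(n)$, while the correction terms $(k-j)Q_j(n)$ for $j<k$ exactly compensate for the undercount of those coefficients in $F_n$ relative to the desired weight $k$. I would note that equality holds precisely when $Q_j(n)=0$ for all $j>k$, and that the statement specializes for $k=1$ to the known bound \eqref{eq4}, $q(n)\leq F_n$, providing a sanity check.
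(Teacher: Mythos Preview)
Your argument is correct: multiplying through by $k$, substituting \eqref{eq2.0a} and \eqref{eq2.0b}, and cancelling the common block $\sum_{j=1}^{k-1}kQ_j(n)$ reduces the claim to the trivial inequality $\sum_{j\geq k}kQ_j(n)\leq\sum_{j\geq k}jQ_j(n)$, valid since each $Q_j(n)\geq 0$. The paper does not spell out a proof but simply cites the method of \cite[Theorem~2]{Merca14b}; your proof is exactly the adaptation of that argument to the present setting, so there is nothing to add.
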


\section{A family of double inequalities}

Recall that the generating function for the number of partitions into distinct parts is given by
\begin{equation}\label{eq3.1}
\sum_{n=0}^{\infty}q(n)x^n
=(-x;x)_{\infty},
\end{equation}
where
$$(a;q)_n=(1-a)(1-aq)(1-aq^2)\cdots(1-aq^{n-1})$$
is  the $q$-shifted factorial with $(a;q)_0=1$. The generating function of the Fibonacci sequence is the power series
$$\sum_{n=0}^{\infty}F_nx^n.$$
We note \cite{Glaister95} that this series is convergent for $\left|x \right|<\varphi-1$, where $\varphi$ is the golden ratio, i.e.,
$$\varphi=\frac{1+\sqrt{5}}{2},$$
and its sum has a simple closed-form:
\begin{equation}\label{eq3.2}
\sum_{n=0}^{\infty} F_nx^n=\frac{x}{1-x-x^2}.
\end{equation}

For $n>0$, we have two trivial inequalities
$$Q_1(n)\leqslant q(n)\qquad\text{and}\qquad q(n)\leqslant F_n.$$
These inequalities allow us to derive
\begin{equation}\label{eq3.3}
1+\sum_{n=1}^{\infty}\frac{x^n}{1-x^{2n}}<(-x;x)_{\infty},\qquad 0<x<1,
\end{equation}
and
\begin{equation}\label{eq3.4}
(-x;x)_{\infty}<\frac{1-x^2}{1-x-x^2},\qquad 0<x<\varphi-1.
\end{equation}
From these inequalities, with $x$ replaced by $1/4$, we obtain the double inequality
$$
1+\sum_{n=1}^{\infty}\frac{4^{n}}{4^{2n}-1} 
< \prod_{n=1}^{\infty}\left(1+\frac{1}{4^{n}} \right)
< \frac{15}{11}.
$$

In this section, we show how to improve the inequalities \eqref{eq3.3} and \eqref{eq3.4}. To do this, we denote by $P$ the set of positive integers greater than 2 whose divisors form a geometric progression, i.e.,
\begin{eqnarray*}
P &=& \{p^r | \text{$p$ prime and $r$ positive integer with $p^r>2$ } \}\\
&=& \{ 3,4,5,7,8,9,11,13,16,17,19,23,25,27,29,31,32,\ldots \}.
\end{eqnarray*}
Using  equation \eqref{eq2.0a} and Theorem \ref{Th2}, we can improve the inequality given in \eqref{eq3.3}.

\begin{corollary}\label{C1}
For $0<x<1$,
\begin{align*}
& 1+\sum_{n=1}^{\infty}\frac{x^n}{1-x^{2n}}<
(-x;x)_{\infty}-\frac{x^4}{(1-x^2)(1-x^4)}\\
&\qquad\qquad\qquad\qquad\qquad-\sum_{k\in P}\left(\frac{x^k}{(1-x^2)(1-x^{2(k-1)})}-\frac{x^k}{1-x^{2k}} \right).
\end{align*}
\end{corollary}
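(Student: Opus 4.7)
The plan is to promote the pointwise identity $q(n) = \sum_{k\geq 1} Q_k(n)$ from \eqref{eq2.0a} to a generating-function identity, substitute the closed forms for the generating functions of $Q_1$, $Q_2$, and $Q_{p^r}$, and then read off the strict inequality from the positivity of the ``non-prime-power'' tail.

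First I would multiply \eqref{eq2.0a} by $x^n$, sum over $n \geq 1$, and interchange the two nonnegative sums (legitimate for $0 < x < 1$). Using \eqref{eq3.1} on the left gives
\[
(-x;x)_\infty - 1 = \sum_{k=1}^\infty \sum_{n=1}^\infty Q_k(n)\, x^n.
\]
Next I substitute \eqref{Q1gen} for the $k=1$ summand, \eqref{Q2gen} for the $k=2$ summand, and apply Theorem \ref{Th2} once to each $k \in P$. Moving the $Q_1$-contribution to the left-hand side and the prime-power contributions to the right-hand side, the identity rewrites as
\[
1 + \sum_{n=1}^\infty \frac{x^n}{1-x^{2n}} = (-x;x)_\infty - \frac{x^4}{(1-x^2)(1-x^4)} - \sum_{k\in P}\!\left(\frac{x^k}{(1-x^2)(1-x^{2(k-1)})} - \frac{x^k}{1-x^{2k}}\right) - R(x),
\]
where
\[
R(x) = \sum_{\substack{k \geq 3 \\ k \notin P}} \sum_{n=1}^\infty Q_k(n)\, x^n
\]
collects precisely the terms indexed by $k \in \{6,10,12,14,15,\ldots\}$, i.e.\ the integers greater than $2$ that are not prime powers.

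To obtain the strict inequality it suffices to show $R(x) > 0$ for $0 < x < 1$. Because every $Q_k(n)$ is a nonnegative integer, I need only exhibit one pair $(k,n)$ with $k \notin P \cup \{1,2\}$ and $Q_k(n) \geq 1$. The smallest non-prime-power is $k = 6$, and the partition into odd parts $9 = 1 + 3 + 5$ has $t_1 = t_2 = t_3 = 1$ and multinomial coefficient $\binom{3}{1,1,1} = 6$; hence $Q_6(9) \geq 1$ and $R(x) \geq x^9 > 0$. The only nonmechanical steps are justifying the interchange of summations and the disjoint decomposition $\{k \geq 1\} = \{1\} \cup \{2\} \cup P \cup \{\text{non-prime-powers} > 2\}$; everything else is algebraic bookkeeping, so the main (and minor) obstacle is simply certifying that the ``remainder'' indexed by composite non-prime-powers is genuinely nonzero.
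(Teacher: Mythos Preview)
Your proof is correct and follows exactly the approach the paper indicates (``Using equation \eqref{eq2.0a} and Theorem \ref{Th2}''): passing \eqref{eq2.0a} to generating functions, inserting the closed forms for $Q_1$, $Q_2$, and $Q_{p^r}$, and then observing that the leftover non--prime-power tail is nonnegative. You in fact supply a detail the paper leaves implicit, namely the explicit witness $Q_6(9)\geq 1$ (from $9=1+3+5$) that forces the remainder $R(x)$ to be strictly positive.
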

 
 From Corollary \ref{C1},  replacing $x$ by $1/4$ and 
 considering only the term for $k=3$ in the last sum, we obtain
\begin{equation}\label{eq3.5}
\frac{69983}{69615}+\sum_{n=1}^{\infty}\frac{4^{n}}{4^{2n}-1} 
< \prod_{n=1}^{\infty}\left(1+\frac{1}{4^{n}} \right).
\end{equation}

The  inequality in the next corollary follows easily from equation \eqref{eq2.0b} and Theorem \ref{Th2}.

\begin{corollary}\label{C2}
For $0<x<\varphi-1$,
\begin{align*}
& \sum_{n=1}^{\infty}\frac{x^n}{1-x^{2n}}<
\frac{x}{1-x-x^2}-\frac{2x^4}{(1-x^2)(1-x^4)}\\
&\qquad\qquad\qquad\qquad\qquad-\sum_{k\in P}\left(\frac{kx^k}{(1-x^2)(1-x^{2(k-1)})}-\frac{kx^k}{1-x^{2k}} \right).
\end{align*}
\end{corollary}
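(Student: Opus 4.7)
The plan mirrors the route to Corollary \ref{C1}: use the weighted identity \eqref{eq2.0b} in place of the unweighted identity \eqref{eq2.0a}, and then transfer termwise to generating functions.

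Concretely, I would split the sum in \eqref{eq2.0b} according to whether the index $k$ equals $1$, equals $2$, lies in $P$, or is none of these:
$$F_n \;=\; Q_1(n)+2Q_2(n)+\sum_{k\in P}kQ_k(n)+R(n),$$
where $R(n):=\sum_{k\notin\{1,2\}\cup P}kQ_k(n)\geqslant 0$. Hence
$$Q_1(n)\;\leqslant\; F_n-2Q_2(n)-\sum_{k\in P}kQ_k(n).$$
Multiplying by $x^n$ with $0<x<\varphi-1$ and summing over $n\geqslant 1$, the four generating functions needed are already at hand: \eqref{Q1gen} supplies the left-hand side; \eqref{eq3.2} gives $\sum F_nx^n=x/(1-x-x^2)$, which converges precisely for $|x|<\varphi-1$; \eqref{Q2gen} provides the $Q_2$-term; and Theorem \ref{Th2} delivers $\sum_{n\geqslant 1}Q_k(n)x^n$ for each $k=p^r\in P$. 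Substituting these four formulas produces exactly the claimed right-hand side, but initially with the weak inequality $\leqslant$.

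To upgrade to strict inequality, it suffices to exhibit a single $n$ with $R(n)>0$; positivity of $x^n$ then yields strictly positive slack after summation. Take $k=6$, which is not a prime power and therefore lies outside $\{1,2\}\cup P$. Since $\binom{3}{1,1,1}=6$, the partition $9=1+3+5$ into three distinct odd parts contributes $1$ to $Q_6(9)$, so $R(9)\geqslant 6>0$ and the inequality becomes strict.

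The main point requiring care, rather than an obstacle per se, is verifying that the rearrangement is legitimate on the interval $0<x<\varphi-1$. Because $\sum_k kQ_k(n)=F_n$, each individual series $\sum_n kQ_k(n)x^n$ is majorized termwise by $\sum_n F_nx^n$, which converges on $|x|<\varphi-1$; moreover the outer sum over $k\in P$ converges because the closed form of Theorem \ref{Th2} makes its $k$-th term $O(kx^k)$. Thus all manipulations are absolutely convergent and the proof is complete.
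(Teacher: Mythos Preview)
Your proof is correct and follows exactly the route the paper indicates: the paper's entire justification is the sentence ``follows easily from equation \eqref{eq2.0b} and Theorem \ref{Th2},'' and your argument is precisely the natural unfolding of that hint, with the added care of justifying strict inequality (via $Q_6(9)\geqslant 1$) and the absolute convergence needed to swap the $n$- and $k$-sums. There is nothing to correct.
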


From Corollary \ref{C2},  replacing $x$ by $1/4$ and 
 considering only the term for $k=3$ in the last sum, we obtain
$$
\sum_{n=1}^{\infty}\frac{4^{n}}{4^{2n}-1} 
<\frac{1347596}{3828825}.
$$

Other inequalities can be obtained considering a few cases of Theorem \ref{Th3}. For example, the case $k=2$ of this theorem reads as
$$q(n)\leqslant \frac{F_n+Q_1(n)}{2},\qquad n>0.$$
Thus,  for $0<x<\varphi-1$, we derive the inequality
\begin{equation}\label{eq3.5a}
(-x;x)_{\infty}<1+\frac{x}{2(1-x-x^2)}+\frac{1}{2}\sum_{n=1}^{\infty}\frac{x^n}{1-x^{2n}}.
\end{equation}
Replacing $x$ by $1/4$, we obtained
\begin{equation}\label{eq3.5b}
\prod_{n=1}^{\infty}\left(1+\frac{1}{4^n} \right)<\frac{13}{11}+\frac{1}{2}\sum_{n=1}^{\infty}\frac{4^n}{4^{2n}-1}.
\end{equation}
For $k=3$ we have
$$q(n)\leqslant \frac{F_n+2Q_1(n)+Q_2(n)}{3},\qquad n>0,$$
Then, for $0<x<\varphi-1$, we obtain
\begin{equation}\label{eq3.5c}
(-x;x)_{\infty}<1+\frac{x}{3(1-x-x^2)}+\frac{x^4}{3(1-x^2)(1-x^4)}+\frac{2}{3}\sum_{n=1}^{\infty}\frac{x^n}{1-x^{2n}}.
\end{equation}
The case $x=1/4$ of this inequality and the inequality \eqref{eq3.5} allow us to write 
\begin{equation}\label{3.last}\frac{69983}{69615}+\sum_{n=1}^{\infty}\frac{4^{n}}{4^{2n}-1} 
<\prod_{n=1}^{\infty}\left(1+\frac{1}{4^n} \right)<\frac{141701}{126225}+\frac{2}{3}\sum_{n=1}^{\infty}\frac{4^n}{4^{2n}-1}.\end{equation}

The statements of Corollaries \ref{C1} and \ref{C2} can be summarized as a family of double inequalities involving generating functions for the number of odd divisors, i.e.,


$$A_k(x)+\sum_{n=0}^{\infty}\frac{x^n}{1-x^{2n}}
<\prod_{n=1}^{\infty}\left(1+x^n\right)
<B_k(x)+\frac{k-1}{k}\sum_{n=0}^{\infty}\frac{x^n}{1-x^{2n}},$$
with $0<x<\varphi-1$.

For $0<x<1$, the sequence $\{ A_k(x)\}_{k>0}$ is strictly monotonically increasing and its terms are given by Corollary \ref{C1}, i.e.,
$$A_1(x)=1,\qquad A_2(x)=A_1(x)+\frac{x^4}{(1-x^2)(1-x^4)},$$
and
$$A_k(x)=A_{k-1}(x)+\frac{x^{p_k}}{(1-x^2)(1-x^{2(p_k-1)})}-\frac{x^{p_k}}{1-x^{2p_k}},$$
where $p_k$ is $k$th positive integer whose divisors form a geometric progression.
 
For $0<x<\varphi-1$, the sequence $\{ B_k(x)\}_{k>0}$ is strictly monotonically decreasing and its terms are given by Theorem  \ref{Th3}, 
$$B_1(x)=1+\frac{x}{1-x-x^2}$$
and
$$kB_k(x)=(k-1)B_{k-1}(x)+1+\sum_{j=2}^{k-1}GF_j(x),$$
where $GF_k(x)$ is the generating function for $Q_k(n)$.

It is still an open problem to give the expression of the generating function for $Q_k(n)$ when $k$ is a positive integer whose divisors do not form a geometric progression.

\section{Upper bounds for $(-x;x)_{\infty}$}

Motivated 
by the results presented in the previous section, we introduce new upper bounds for the generating function for the number of odd divisors of positive integers. We use the recurrence relation \eqref{eq2} and the inequality given by the following theorem. 

\begin{theorem}\label{thm4} For any positive integer $n$, we have \begin{equation}\label{q-ineq} q(n)-q(n-1)-q(n-2)\leqslant 0,\quad n>0\end{equation}

\end{theorem}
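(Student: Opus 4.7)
The plan is to prove the equivalent statement $q(n) \leqslant q(n-1) + q(n-2)$ for $n > 0$ by constructing an explicit injection from partitions of $n$ into odd parts into the disjoint union of the sets of odd-part partitions of $n-1$ and of $n-2$. Since $q(m)$ counts partitions into odd parts as well as into distinct parts, the inequality will follow by cardinality comparison. I choose the odd-parts encoding because it interacts cleanly with subtracting $2$ from a part.

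Given a partition $\lambda$ of $n$ into odd parts, I split into two cases according to whether $1$ occurs as a part. If $1$ is a part of $\lambda$, delete one copy of $1$, producing an odd-part partition of $n-1$; this operation is inverted by appending a $1$. If $1$ does not occur, then every part of $\lambda$ is odd and at least $3$. Let $s$ denote the smallest part and replace one copy of $s$ by $s-2$. Since $s \geqslant 3$ is odd, $s-2$ is a positive odd integer, and the resulting partition has odd parts summing to $n-2$. Moreover every remaining part is $\geqslant s > s-2$, so $s-2$ occurs as the strictly smallest part of the image; this lets the map be inverted by identifying the smallest part of the image and adding $2$.

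The two cases have disjoint targets (one in the odd-part partitions of $n-1$, the other in those of $n-2$), and each is individually injective via the inverses just described, so the combined map is an injection. Together with the conventions $q(0)=1$ and $q(-1)=0$, which cover the boundary cases $n=1,2$, this yields the required inequality.

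I expect the main subtle point to be verifying that Case~2 is genuinely invertible, which reduces to the observation that subtracting $2$ from the smallest part of a partition whose parts are all odd and $\geqslant 3$ produces a value strictly smaller than every other part of the image. Once this is pinned down, disjointness of the targets and the trivial inverse in Case~1 finish the argument, so no generating-function manipulation is needed.
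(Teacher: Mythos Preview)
Your argument is correct and coincides exactly with the paper's second (combinatorial) proof: split odd-part partitions of $n$ according to whether the smallest part is $1$ or $\geqslant 3$, and map the two cases injectively into odd-part partitions of $n-1$ and $n-2$ respectively. The paper's primary proof takes a different route, showing via Euler's identity that $(1-x-x^2)/(x;x^2)_\infty$ has nonpositive coefficients in positive degree, but your approach already appears in the paper as an alternative.
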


\begin{proof}
Taking into account the identity
$$(-x;x)_{\infty}=\frac{1}{(x;x^2)_{\infty}},$$
we need to show that, the coefficient of $x^n$ in
$$\frac{1-x-x^2}{(x;x^2)_{\infty}}$$
is non-positive for $n>0$. Using Euler's formula \cite[p. 19, Eq.(2.2.5)]{Andrews76}
$$\sum_{n=0}^{\infty}\frac{z^n}{(q;q)_n}=\frac{1}{(z;q)}_{\infty},\qquad \left| z \right|, \left| q \right|<1,  $$ we have
\begin{eqnarray*}
\frac{1-x-x^2}{(x;x^2)_{\infty}}
&=& \frac{1}{(x^3;x^2)_{\infty}}-\frac{x^2}{(1-x)(x^3;x^2)_{\infty}} \\
&=& \sum_{n=0}^{\infty}\frac{x^{3n}}{(x^2;x^2)_{n}}
-\sum_{n=0}^{\infty}\frac{x^{3n+2}}{(1-x)(x^2;x^2)_{n}}  \\
&=& 1+\sum_{n=1}^{\infty}\frac{x^{3n}}{(x^2;x^2)_{n}}
-\sum_{n=1}^{\infty}\frac{x^{3n-1}(1-x^{2n})}{(1-x)(x^2;x^2)_{n}}  \\
&=& 1+\sum_{n=1}^{\infty}\frac{x^{3n-1}}{(x^2;x^2)_{n}}\left( x
-\frac{1-x^{2n}}{1-x} \right)  \\
&=& 1+\sum_{n=1}^{\infty}\frac{x^{3n-1}}{(x^2;x^2)_{n}}\left( x
-(1+x+x^2+\cdots+x^{2n-1}) \right)  \\
&=& 1-\sum_{n=1}^{\infty}\frac{x^{3n-1}}{(x^2;x^2)_{n}}\left( 
1+x^2+\cdots+x^{2n-1} \right) ,
\end{eqnarray*}
Then, for $n>0$, the coefficient of $x^n$ is non-positive.
\end{proof}

Theorem \ref{thm4} can also be proven combinatorially as follows. \medskip

\noindent \textit{Proof 2.} To see that \eqref{q-ineq} holds, consider the following injections. Each partition of $n$ with odd parts and smallest part equal to $1$ corresponds to a partition of $n-1$ with odd parts (by removing a part equal to $1$).  Moreover,  each partition of $n$ with odd parts and smallest part at least  $3$ corresponds to a partition  of $n-2$ with odd parts (by subtracting $2$ from the smallest part). \hfill \ \hfill $\Box$\medskip

Clearly,  the inequality
\begin{equation}\label{eq4.1}
q(n-1)+q(n-2)\leqslant F_n,\qquad n>2
\end{equation}
is stronger than the trivial inequality \eqref{eq4}. On the other hand, we have the identity
$$F_{n-2}+2F_{n-3}+F_{n-4}=F_n,$$
which allows us to derive the following inequality
\begin{equation}\label{eq4.2}
q(n-2)+2q(n-3)+q(n-4)\leqslant F_n,\qquad n>4.
\end{equation}
Finally, after $k$ steps we obtain
\begin{proposition}\label{C4.1}
Let $k$ and $n$ be two nonnegative integers such that $n>2k$. Then
\begin{equation}\label{eq-prop1}\sum_{j=0}^{k}\binom{k}{j}q(n-k-j)\leqslant F_n.\end{equation}
\end{proposition}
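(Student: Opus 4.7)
The plan is to prove the proposition by isolating the combinatorial identity it rests on, and then using the termwise inequality $q(m)\leqslant F_m$ from \eqref{eq4}. Concretely, I would first show the Fibonacci identity
$$F_n=\sum_{j=0}^{k}\binom{k}{j}F_{n-k-j},$$
which generalizes the two instances already recorded in the paper, namely $F_n=F_{n-1}+F_{n-2}$ ($k=1$) and $F_n=F_{n-2}+2F_{n-3}+F_{n-4}$ ($k=2$). Once this identity is available, the proof collapses to one line: apply the trivial bound $q(n-k-j)\leqslant F_{n-k-j}$ from \eqref{eq4} to each term on the right and sum.

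I would prove the Fibonacci identity by induction on $k$. The base case $k=1$ is the defining recurrence \eqref{eq2}. For the inductive step, assume the identity holds for some $k$; then applying $F_{n-k-j}=F_{n-k-j-1}+F_{n-k-j-2}$ to each summand gives
$$F_n=\sum_{j=0}^{k}\binom{k}{j}F_{n-(k+1)-j}+\sum_{j=0}^{k}\binom{k}{j}F_{n-(k+1)-(j+1)},$$
and after reindexing the second sum by $j\mapsto j-1$ and combining coefficients via Pascal's rule $\binom{k}{j}+\binom{k}{j-1}=\binom{k+1}{j}$, the right-hand side becomes $\sum_{j=0}^{k+1}\binom{k+1}{j}F_{n-(k+1)-j}$, closing the induction.

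For the final step, I would note that the hypothesis $n>2k$ ensures $n-k-j\geqslant n-2k>0$ for every $0\leqslant j\leqslant k$, so \eqref{eq4} applies to every term (this matters because $q(0)=1$ while $F_0=0$, so the inequality genuinely requires positive indices). Summing yields
$$\sum_{j=0}^{k}\binom{k}{j}q(n-k-j)\leqslant\sum_{j=0}^{k}\binom{k}{j}F_{n-k-j}=F_n,$$
which is \eqref{eq-prop1}.

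There is no real obstacle: the only content is the Fibonacci identity, and that is routine by induction plus Pascal. The one subtle point worth flagging in the writeup is why the hypothesis is $n>2k$ rather than $n\geqslant 2k$ — precisely because the chain of inequalities uses \eqref{eq4}, which is stated for positive arguments only.
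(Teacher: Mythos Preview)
Your proposal is correct and follows essentially the same approach as the paper, which sketches the argument as ``after $k$ steps'' of applying the Fibonacci recurrence \eqref{eq2} together with the trivial bound \eqref{eq4}; you have simply made the induction on $k$ for the identity $F_n=\sum_{j=0}^{k}\binom{k}{j}F_{n-k-j}$ explicit (the paper also invokes this identity verbatim at the start of Section~5). Your observation that the hypothesis $n>2k$ is needed precisely so that \eqref{eq4} applies at the endpoint $j=k$ is a detail the paper leaves implicit.
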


The proposition gives an infinite family of inequalities. The  inequalities  \eqref{eq4}, \eqref{eq4.1}, and \eqref{eq4.2} correspond to \eqref{eq-prop1} with  $k=0,1,$ and $2$ respectively.

The next theorem introduces an infinite family of upper bounds for $(-x;x)_{\infty}$, when $0<x<\varphi-1$. 

\begin{theorem}\label{th4}
Let $k$ be a nonnegative integer. For $0<x<\varphi-1$,
$$(-x;x)_{\infty}
<\frac{1}{x^{k-1}(1+x)^k(1-x-x^2)}
-\frac{1}{x^k(1+x)^k}\sum_{n=0}^{2k}\left(F_n-S_{n,k} \right)x^n,$$
where 
$$S_{n,k}=\sum_{j=0}^{k}\binom{k}{j}q(n-k-j).$$
\end{theorem}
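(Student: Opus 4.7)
The plan is to lift the coefficient-wise inequality $S_{n,k}\leq F_n$ of Proposition \ref{C4.1} (valid for $n>2k$) to a generating function inequality, and then solve for $(-x;x)_\infty$. The only algebraic miracle needed is the computation of the generating function of $\{S_{n,k}\}_{n\geq 0}$, which telescopes neatly through the binomial theorem.

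First I would compute $\sum_{n\geq 0}S_{n,k}x^n$. Swapping the order of summation and using $q(m)=0$ for $m<0$, one gets
$$\sum_{n=0}^{\infty}S_{n,k}\,x^n \;=\;\sum_{j=0}^{k}\binom{k}{j}\sum_{m=0}^{\infty}q(m)\,x^{m+k+j}\;=\;x^k(1+x)^k\,(-x;x)_{\infty}.$$
Combined with the Fibonacci generating function \eqref{eq3.2}, this yields the identity
$$\frac{x}{1-x-x^2}\;-\;x^k(1+x)^k(-x;x)_{\infty}\;=\;\sum_{n=0}^{\infty}(F_n-S_{n,k})\,x^n,$$
valid for $0<x<\varphi-1$, where both series converge.

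Next I would split the right-hand sum at $n=2k$:
$$\sum_{n=0}^{\infty}(F_n-S_{n,k})x^n\;=\;\sum_{n=0}^{2k}(F_n-S_{n,k})x^n\;+\;\sum_{n=2k+1}^{\infty}(F_n-S_{n,k})x^n.$$
By Proposition \ref{C4.1} every term in the tail is nonnegative, and for $0<x<\varphi-1$ the tail is in fact strictly positive: the coefficients $F_n$ grow like $\varphi^n$ while $S_{n,k}\leq 2^k q(n-k)$ grows only like $e^{c\sqrt{n}}$, so $F_n>S_{n,k}$ for all sufficiently large $n$. Therefore
$$x^k(1+x)^k(-x;x)_{\infty}\;<\;\frac{x}{1-x-x^2}\;-\;\sum_{n=0}^{2k}(F_n-S_{n,k})\,x^n.$$

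Dividing through by $x^k(1+x)^k$ and using $x/(x^k(1+x)^k) = 1/(x^{k-1}(1+x)^k)$ gives exactly the stated upper bound. The main technical point is justifying strict inequality in the infinite tail; the growth-rate argument above (or equivalently the observation that any single index $n>2k$ with $F_n>S_{n,k}$ contributes a positive term $(F_n-S_{n,k})x^n>0$) settles it. No case analysis by parity or by prime-power $k$ is needed, which is why this bound holds for \emph{every} nonnegative integer $k$.
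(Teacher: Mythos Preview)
Your proof is correct and follows essentially the same route as the paper: compute $\sum_{n\ge 0}S_{n,k}x^n=x^k(1+x)^k(-x;x)_\infty$, compare term-by-term with the Fibonacci generating function using Proposition~\ref{C4.1} for $n>2k$, and divide through by $x^k(1+x)^k$. Your explicit growth-rate justification of the strict inequality is a nice touch that the paper leaves implicit.
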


\begin{proof} We first derive the  generating function for $S_{n,k}$.  Using the well-known Cauchy products of two power series, we have $$\left(\sum_{j=0}^{k}\binom{k}{j}x^j \right) \left(\sum_{n=k}^{\infty}q(n-k)x^n \right)=\sum_{n=0}^{\infty}\left(\sum_{j=0}^{k}\binom{k}{j}q(n-k-j) \right)x^n.  $$ Therefore, $$\sum_{n=0}^{\infty}S_{n,k}x^n=x^k(1+x)^k(-x;x)_{\infty},\qquad \left|x \right|<1.$$
Then, using  Proposition  \ref{C4.1}, for $0<x<\varphi-1$, we can write
$$x^k(1+x)^k(-x;x)_{\infty}-\sum_{n=0}^{2k}S_{n,k}x^n<\frac{x}{1-x-x^2}-\sum_{n=0}^{2k}F_nx^n,$$
and the proof of the theorem follows.
\end{proof}

From Theorem \ref{th4}, for each $k\in \{0,1,2,3\}$ we obtain the inequality \eqref{eq3.4}. The case $k=4$ of this theorem read as
$$(-x;x)_{\infty}=\prod_{n=1}^{\infty}(1+x^n)<\frac{1+4x+5x^2-6x^4-3x^5}{(1+x)^4(1-x-x^2)},\qquad 0<x<\varphi-1.$$
Replacing $x$ by $1/4$ in the last inequality, we obtain 
$$\prod_{n=1}^{\infty}\left( 1+\frac{1}{4^n} \right) <\frac{9364}{6875}=1.362036364...\ .$$ 
For $k=5$, we obtain
$$\prod_{n=1}^{\infty}(1+x^n)<\frac{1+5x+9x^2+5x^3-6x^4-15x^5+3x^6+6x^7}{(1+x)^5(1-x-x^2)},\qquad 0<x<\varphi-1.$$
From this inequality, with $x$ replaced by $1/4$, we obtain
$$\prod_{n=1}^{\infty}\left( 1+\frac{1}{4^n} \right) <\frac{46754}{34375}=1.360116364...\ .$$

Note that using a partial sum with $16$ terms in \eqref{3.last},  we obtain a lower bound for $\left( -\frac{1}{4};\frac{1}{4}\right) _{\infty}$ of $1.35553519$. 
Moreover, if $k=6$ in Theorem \ref{th4}, 
$$\prod_{n=1}^{\infty}\left( 1+x^n \right) <\frac{30x^8+33x^7-36x^6-21x^5-x^4+14x^3+14x^2+6x+1}{(1-x-x^2)(1+x)^6}$$ and for $x=1/4$ we obtain 
$$\prod_{n=1}^{\infty}\left( 1+\frac{1}{4^n} \right) <\frac{233506}{171875}=1.3585803636...\ .$$

We denote by $R_k(x)$ the right hand side of the inequality of Theorem \ref{th4} and show  that $\{R_k(x)\}_{k \geqslant 0}$ is a decreasing sequence for $x >0$. First we prove a helpful lemma. 

\begin{lemma} \label{th5}
Let $k$ and $n$ be two nonnegative. Then
$$S_{n+2,k+1}=S_{n+1,k}+S_{n,k}$$
\end{lemma}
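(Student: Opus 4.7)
The plan is to give a short algebraic proof by applying Pascal's identity $\binom{k+1}{j}=\binom{k}{j}+\binom{k}{j-1}$ directly to the definition of $S_{n+2,k+1}$. Starting from
$$S_{n+2,k+1}=\sum_{j=0}^{k+1}\binom{k+1}{j}\,q\bigl(n+1-k-j\bigr),$$
I would split this into two sums using Pascal's rule, obtaining
$$S_{n+2,k+1}=\sum_{j=0}^{k+1}\binom{k}{j}q(n+1-k-j)+\sum_{j=0}^{k+1}\binom{k}{j-1}q(n+1-k-j).$$
In the first sum, the $j=k+1$ term vanishes because $\binom{k}{k+1}=0$, so it collapses to $\sum_{j=0}^{k}\binom{k}{j}q(n+1-k-j)=S_{n+1,k}$. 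In the second sum, the $j=0$ term vanishes since $\binom{k}{-1}=0$; re-indexing $i=j-1$ then yields $\sum_{i=0}^{k}\binom{k}{i}q(n-k-i)=S_{n,k}$. Adding the two identifications gives the asserted recurrence.

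As a sanity check, one can also verify the identity at the level of generating functions. In the proof of Theorem \ref{th4} it was already shown that $\sum_{n\ge 0}S_{n,k}x^n=x^k(1+x)^k(-x;x)_{\infty}$; the left-hand side of the lemma corresponds (after shifting) to $x^{k-1}(1+x)^{k+1}(-x;x)_{\infty}$, which is exactly what one gets from the right-hand side $x^{-1}\cdot x^k(1+x)^k(-x;x)_{\infty}+x^k(1+x)^k(-x;x)_{\infty}=x^{k-1}(1+x)^{k+1}(-x;x)_{\infty}$. This provides a one-line confirmation and also motivates the Pascal-rule argument.

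There is essentially no obstacle here: the lemma is Pascal's rule in disguise for the sequence $S_{n,k}$, and the only point requiring a bit of care is the boundary bookkeeping (recognizing that $\binom{k}{-1}$ and $\binom{k}{k+1}$ vanish) and the index shift $j\mapsto j-1$ in the second sum.
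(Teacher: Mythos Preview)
Your proof is correct and is essentially the same argument as the paper's: both use Pascal's identity on $\binom{k+1}{j}$ and an index shift $j\mapsto j-1$, the only cosmetic difference being that the paper first forms the difference $S_{n+2,k+1}-S_{n+1,k}$ and then applies $\binom{k+1}{j}-\binom{k}{j}=\binom{k}{j-1}$, whereas you split $\binom{k+1}{j}$ directly. Your generating-function sanity check is a nice addition but not part of the paper's argument.
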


\begin{proof} We have
\begin{align*}
&S_{n+2,k+1}-S_{n+1,k}\\
&\qquad=\sum_{j=0}^{k+1} \binom{k+1}{j}q(n+1-k-j)- \sum_{j=0}^{k} \binom{k}{j}q(n+1-k-j)\\
&\qquad=\sum_{j=0}^{k} \left(\binom{k+1}{j}-\binom{k}{j}\right)q(n+1-k-j)+q(n-2k)\\
&\qquad=\sum_{j=1}^{k} \binom{k}{j-1}q(n+1-k-j)+q(n-2k)\\
&\qquad=\sum_{j=0}^{k-1} \binom{k}{j}q(n-k-j)+q(n-2k)\\
&\qquad=\sum_{j=0}^{k} \binom{k}{j}q(n-k-j)\\
&\qquad=S_{n,k}. 
\end{align*}
\end{proof}

\begin{theorem}\label{decr} 
Let $k$ be a nonnegative integer. For $x>0$, $$R_{k+1}(x)-R_k(x) \leqslant 0.$$
\end{theorem}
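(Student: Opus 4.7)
The plan is to exploit the Fibonacci generating function to rewrite $R_k(x)$ in a series form, then show that $R_{k+1}(x)-R_k(x)$ has a non-positive power series numerator when brought over a common denominator.

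First I would observe that $\sum_{n\geq 0} F_n x^n = x/(1-x-x^2)$, so after multiplying through by $x^k(1+x)^k$ the definition of $R_k(x)$ becomes
\[
x^k(1+x)^k R_k(x) \;=\; \sum_{n\geq 0} F_n x^n \;-\; \sum_{n=0}^{2k}(F_n-S_{n,k})x^n \;=\; \sum_{n=0}^{2k} S_{n,k}\,x^n \;+\; \sum_{n>2k} F_n\,x^n.
\]
Writing $a_n^{(k)}$ for the coefficient of $x^n$ on the right (so $a_n^{(k)}=S_{n,k}$ if $n\leq 2k$ and $a_n^{(k)}=F_n$ if $n>2k$), we get $R_k(x)=x^{-k}(1+x)^{-k}\sum_n a_n^{(k)} x^n$.

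Next I would compute
\[
R_{k+1}(x)-R_k(x) \;=\; \frac{1}{x^{k+1}(1+x)^{k+1}} \Bigl[\, \sum_n a_n^{(k+1)} x^n \;-\; x(1+x)\sum_n a_n^{(k)} x^n\, \Bigr].
\]
Since $x>0$ makes the prefactor positive, it suffices to show that every coefficient of the bracketed series $D(x)$ is non-positive. The coefficient of $x^n$ in $D(x)$ is $a_n^{(k+1)} - a_{n-1}^{(k)} - a_{n-2}^{(k)}$ (with $a_{-1}^{(k)}=a_{-2}^{(k)}=0$). I would split into cases by size of $n$: for $0\leq n\leq 2k+1$, both subscripts on the right lie in the $S$-range, and Lemma~\ref{th5} gives $S_{n,k+1}=S_{n-1,k}+S_{n-2,k}$, so the coefficient is exactly $0$; for $n>2k+2$, all three terms are Fibonacci, and the coefficient is $F_n-F_{n-1}-F_{n-2}=0$ by \eqref{eq2}. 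The only surviving coefficient is at $n=2k+2$.

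For that boundary term, $a_{2k+2}^{(k+1)}=S_{2k+2,k+1}$, $a_{2k+1}^{(k)}=F_{2k+1}$ (since $2k+1>2k$), and $a_{2k}^{(k)}=S_{2k,k}$. Applying Lemma~\ref{th5} once more, $S_{2k+2,k+1}=S_{2k+1,k}+S_{2k,k}$, so the coefficient collapses to $S_{2k+1,k}-F_{2k+1}$, which is $\leq 0$ by Proposition~\ref{C4.1} (applied with $n=2k+1>2k$). Thus $D(x)$ has only non-positive coefficients and at least one strictly negative term, so $R_{k+1}(x)-R_k(x)\leq 0$ for $x>0$. The main obstacle is simply the bookkeeping at the boundary $n=2k+2$, where the pieces $S$ and $F$ meet and Proposition~\ref{C4.1} must be invoked; all the other cancellations are forced by the two recursions.
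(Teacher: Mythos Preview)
Your argument is correct and is essentially the same as the paper's: both reduce $x^{k+1}(1+x)^{k+1}\bigl(R_{k+1}(x)-R_k(x)\bigr)$ to the single monomial $(S_{2k+1,k}-F_{2k+1})x^{2k+2}$ via Lemma~\ref{th5} and the Fibonacci recurrence, and then invoke Proposition~\ref{C4.1} at $n=2k+1$. The only cosmetic difference is that the paper works directly with the polynomial $L_k(x)$, whereas your infinite-series bookkeeping tacitly assumes $0<x<\varphi-1$ for convergence; since you end up showing $D(x)$ is a monomial, the conclusion extends to all $x>0$ anyway.
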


\begin{proof} After some simple manipulations, we see that the inequality in the statement of the theorem 
is equivalent to  
$$L_k(x)=x+x(1+x)\sum_{n=0}^{2k}\left(F_n-S_{n,k} \right)x^n  -\sum_{n=0}^{2k+2}\left(F_n-S_{n,k+1} \right)x^n\leqslant 0.$$ 
Then, using Lemma \ref{th5}, we obtain
$$
L_k(x)=S_{0,k+1}+(S_{1,k+1}-S_{0,k})x+(F_{2k}-F_{2k+2}+S_{2k+2,k+1}-S_{2k,k})x^{2k+2}.
$$
Note  that $S_{0,k+1}=0$ and $S_{0,k}=S_{1,k+1}$ for all $k\geqslant 0$. 
Then, by Lemma \ref{th5} again, we have
$$L_k(x)=(S_{2k+1,k}-F_{2k+1})x^{2k+2}.$$ 
The result now follows from Proposition \ref{C4.1} with $n=2k+1$.
\end{proof}

\section{Further inequalities involving the generating functions for $q(n), Q_1(n)$, and $Q_2(n)$}

In this section, we can proceed in analogy with the theorems of the previous section to obtain new families of inequalities. We make use of the identity$$\sum_{j=0}^{k}\binom{k}{j}F_{n-k-j}=F_n$$ and 
 the special cases  of Theorem \ref{Th3} when $k=2, 3$, i.e.,  $$2q(n)-Q_1(n) \leqslant F_n,$$ $$3q(n)-2Q_1-Q_2(n)\leqslant F_n.$$

The proofs of the next two theorems are similar to the proof of Theorem \ref{th4}.

\begin{theorem}\label{th6}
Let $k$ be a nonnegative integer. For $0<x<\varphi-1$,
$$2(-x;x)_{\infty}-\sum_{n=1}^{\infty}\frac{x^n}{1-x^{2n}}
<\frac{1}{x^{k-1}(1+x)^k(1-x-x^2)}
-\frac{1}{x^k(1+x)^k}\sum_{n=0}^{2k}\left(F_n-S_{n,k} \right)x^n,$$
where 
$$S_{n,k}=2\sum_{j=0}^{k}\binom{k}{j}q(n-k-j)-\sum_{j=0}^{k}\binom{k}{j}Q_1(n-k-j),$$
with $Q_1(n)=0$ for any negative $n$.
\end{theorem}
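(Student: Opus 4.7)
The plan is to follow the proof of Theorem \ref{th4} essentially verbatim, with the weaker ingredient $q(n)\leqslant F_n$ replaced by the stronger hybrid inequality
$$2q(n)-Q_1(n)\leqslant F_n,$$
which is the $k=2$ case of Theorem \ref{Th3}. All the generating-function manipulations then carry over with only cosmetic changes.

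First I would establish the analog of Proposition \ref{C4.1}:
$$S_{n,k}\leqslant F_n \qquad \text{for } n>2k,$$
where $S_{n,k}=\sum_{j=0}^{k}\binom{k}{j}\bigl(2q(n-k-j)-Q_1(n-k-j)\bigr)$. The base case $k=0$ is exactly the hybrid inequality above. For the induction step, note that the key identity of Lemma \ref{th5}, namely $S_{n+2,k+1}=S_{n+1,k}+S_{n,k}$, is a purely algebraic consequence of Pascal's identity $\binom{k+1}{j}=\binom{k}{j}+\binom{k}{j-1}$ applied termwise. Since the new $S_{n,k}$ carries the same binomial weights, the argument of Lemma \ref{th5} transfers unchanged. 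Combining $S_{n+2,k+1}=S_{n+1,k}+S_{n,k}$ with $F_{n+2}=F_{n+1}+F_n$ closes the induction.

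Next I would compute the generating function of $S_{n,k}$ via a Cauchy product. Setting
$$A(x)=\sum_{n=0}^{\infty}\bigl(2q(n)-Q_1(n)\bigr)x^n = 2(-x;x)_{\infty}-\sum_{n=1}^{\infty}\frac{x^n}{1-x^{2n}},$$
the expansion $(1+x)^k=\sum_{j=0}^{k}\binom{k}{j}x^j$ together with an index shift yields
$$\sum_{n=0}^{\infty}S_{n,k}x^n = x^k(1+x)^k A(x),$$
exactly as in the proof of Theorem \ref{th4}. Coupling this with the bound $S_{n,k}\leqslant F_n$ for $n>2k$ and with $\sum_{n\geqslant 0}F_n x^n=x/(1-x-x^2)$, valid on $0<x<\varphi-1$, we obtain
$$x^k(1+x)^k A(x)-\sum_{n=0}^{2k}S_{n,k}x^n < \frac{x}{1-x-x^2}-\sum_{n=0}^{2k}F_n x^n,$$
and dividing by $x^k(1+x)^k$ delivers the inequality in the statement.

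I do not anticipate a substantive obstacle: the induction in the first step is routine once one observes that Lemma \ref{th5} used only Pascal's identity, and the generating-function step is formally identical to Theorem \ref{th4}. The only minor point requiring care is the passage from the coefficientwise $\leqslant$ to the strict $<$ between the two power series on $0<x<\varphi-1$; this is immediate because $2q(n)-Q_1(n)<F_n$ strictly at infinitely many $n>2k$ (indeed already for small odd $n$), so each term $x^n>0$ contributes to a strict gap.
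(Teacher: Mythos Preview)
Your proposal is correct and follows essentially the same route as the paper, which simply states that the proof is ``similar to the proof of Theorem~\ref{th4}'' using the hybrid inequality $2q(n)-Q_1(n)\leqslant F_n$ together with the identity $\sum_{j=0}^{k}\binom{k}{j}F_{n-k-j}=F_n$. Your inductive derivation of $S_{n,k}\leqslant F_n$ via the Lemma~\ref{th5} recursion is logically equivalent to the paper's direct termwise application of that Fibonacci identity, so there is no substantive difference in approach.
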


For each $k\in\{0,1,2,3,4\}$, Theorem \ref{th6} reduces to  inequality \eqref{eq3.5a}.
The case $k=5$ gives
$$2(-x;x)_{\infty}-\sum_{n=1}^{\infty}\frac{x^n}{1-x^{2n}}
<\frac{2+9x+13x^2-20x^4-24x^5-10x^6-x^7}{(1+x)^5(1-x-x^2)}$$
for $0<x<\varphi-1$.
Replacing $x$ by $1/4$, we obtain
$$\prod_{n=1}^{\infty}\left(1+\frac{1}{4^n} \right)
<\frac{81239}{68750}+\frac{1}{2}\sum_{n=1}^{\infty}\frac{4^n}{4^{2n}-1}
=2.363316364\ldots+\frac{1}{2}\sum_{n=1}^{\infty}\frac{4^n}{4^{2n}-1}
.$$
This is an improved version of the inequality \eqref{eq3.5b}.
If $k=6$ and $0<x<\varphi-1$,  we have
$$2(-x;x)_{\infty}-\sum_{n=1}^{\infty}\frac{x^n}{1-x^{2n}}
<\frac{2+11x+22x^2+13x^3-20x^4-44x^5-41x^6-4x^7+6x^8}{(1+x)^6(1-x-x^2)},$$
and for $x=1/4$ we obtain
$$\prod_{n=1}^{\infty}\left(1+\frac{1}{4^n} \right)
<\frac{406118}{171875}+\frac{1}{2}\sum_{n=1}^{\infty}\frac{4^n}{4^{2n}-1}
=2.362868364\ldots+\frac{1}{2}\sum_{n=1}^{\infty}\frac{4^n}{4^{2n}-1}.$$

\begin{theorem}\label{th7}
Let $k$ be a nonnegative integer. For $0<x<\varphi-1$,
\begin{eqnarray*}
3(-x;x)_{\infty}-2\sum_{n=1}^{\infty}\frac{x^n}{1-x^{2n}}
&<&\frac{x^4}{(1-x^2)(1-x^4)}+\frac{1}{x^{k-1}(1+x)^k(1-x-x^2)}\\
& &\qquad -\frac{1}{x^k(1+x)^k}\sum_{n=0}^{2k}\left(F_n-S_{n,k} \right)x^n,
\end{eqnarray*}
where 
$$S_{n,k}=\sum_{j=0}^{k}\binom{k}{j}(3q(n-k-j)-2Q_1(n-k-j)-Q_2(n-k-j)),$$
with $Q_1(n)=Q_2(n)=0$ for any negative $n$.
\end{theorem}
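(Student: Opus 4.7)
The plan is to follow the template of Theorems \ref{th4} and \ref{th6}, starting now from the $k=3$ case of Theorem \ref{Th3} recalled just before the statement, namely $3q(n)-2Q_1(n)-Q_2(n)\leqslant F_n$ for $n>0$. The overall idea is to apply a binomial convolution to turn this coefficient-wise bound into $S_{n,k}\leqslant F_n$ (valid for $n>2k$), translate the coefficient inequality into a generating-function inequality via a Cauchy product, and then solve for $(-x;x)_{\infty}$.

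First I would verify $S_{n,k}\leqslant F_n$ whenever $n>2k$. For each $j\in\{0,\ldots,k\}$ with $n-k-j>0$ (ensured by $n>2k$), the $k=3$ case of Theorem \ref{Th3} gives $3q(n-k-j)-2Q_1(n-k-j)-Q_2(n-k-j)\leqslant F_{n-k-j}$. Weighting by $\binom{k}{j}$ and summing, together with the Fibonacci identity $\sum_{j=0}^{k}\binom{k}{j}F_{n-k-j}=F_n$ (recorded at the top of Section 5), yields the claim.

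Next I would compute the generating function of $S_{n,k}$ by a Cauchy product, exactly as in the proof of Theorem \ref{th4}. Setting $h(m)=3q(m)-2Q_1(m)-Q_2(m)$ (with $h(m)=0$ for $m<0$) and using \eqref{eq3.1}, \eqref{Q1gen} and \eqref{Q2gen} together with $Q_1(0)=Q_2(0)=0$, one obtains
$$\sum_{n=0}^{\infty}S_{n,k}\,x^n = x^k(1+x)^k\!\left[3(-x;x)_{\infty}-2\sum_{m=1}^{\infty}\frac{x^m}{1-x^{2m}}-\frac{x^4}{(1-x^2)(1-x^4)}\right].$$
For $0<x<\varphi-1$ the series $\sum_{n\geqslant 0}F_nx^n=x/(1-x-x^2)$ converges, and summing the inequality $S_{n,k}\leqslant F_n$ against $x^n$ over $n>2k$ gives
$$\sum_{n=0}^{\infty}S_{n,k}\,x^n-\sum_{n=0}^{2k}S_{n,k}\,x^n \;\leqslant\; \frac{x}{1-x-x^2}-\sum_{n=0}^{2k}F_nx^n.$$
Substituting the closed form above for the left-hand sum, dividing through by $x^k(1+x)^k$, and moving the term $x^4/[(1-x^2)(1-x^4)]$ to the right-hand side produces exactly the inequality of Theorem \ref{th7}.

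The main obstacle is essentially bookkeeping: tracking signs during the final rearrangement and verifying that the inequality is strict. The latter follows because $F_n-(3q(n)-2Q_1(n)-Q_2(n))=\sum_{j\geqslant 4}(j-3)Q_j(n)$ is strictly positive for infinitely many $n$ (for instance $Q_4(n)\geqslant 1$ for every even $n\geqslant 6$ by Theorem \ref{Th1}), so at least one term in the tail sum $\sum_{n>2k}(F_n-S_{n,k})x^n$ contributes strictly, giving strict inequality overall.
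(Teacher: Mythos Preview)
Your proof is correct and follows essentially the same approach as the paper, which explicitly states that the proof of Theorem~\ref{th7} is similar to that of Theorem~\ref{th4}. If anything, your treatment is more careful than the paper's sketch: you spell out the Fibonacci binomial identity needed to pass from $3q(m)-2Q_1(m)-Q_2(m)\leqslant F_m$ to $S_{n,k}\leqslant F_n$, and you justify the strict inequality, which the paper leaves implicit.
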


For each $k<6$, Theorem \ref{th7} reduces to  inequality \eqref{eq3.5c}. The case $k=6$  reads as
\begin{align*}
& 3(-x;x)_{\infty}-2\sum_{n=1}^{\infty}\frac{x^n}{1-x^{2n}}<\frac{x^4}{(1-x^2)(1-x^4)}\\
&\qquad\qquad\qquad+\frac{3+16x+30x^2+12x^3-40x^4-72x^5-55x^6-19x^7-2x^8}{(1+x)^6(1-x-x^2)},
\end{align*}
for $0<x<\varphi-1$.
Replacing $x$  by $1/4$ in this inequality, we obtain
$$\prod_{n=1}^{\infty}\left(1+\frac{1}{4^n} \right)
<\frac{88561442}{78890625}+\frac{2}{3}\sum_{n=1}^{\infty}\frac{4^n}{4^{2n}-1}
=1.122585123\ldots+\frac{2}{3}\sum_{n=1}^{\infty}\frac{4^n}{4^{2n}-1}.$$

\section{Concluding remarks}

Using the representation of Fibonacci number as a sum of multinomial coefficients over partitions with odd parts, we obtain several infinite families of double inequalities involving the generating function of the number of partitions with odd parts,  the generating function for the number of odd divisors and the generating function of the number of partitions in two distinct odd parts. We note that when the problem of finding a closed form for the generating function of $Q_k(n)$ for arbitrary $k$ will be solved, then further, stronger inequality families will follow by the methods used in this article. 

\section*{Acknowledgements} The second author wishes to thank the College of the Holy Cross for its hospitality during the writing of this article.



\bigskip


\end{document}